\documentclass[final,2p,times]{elsarticle}

\usepackage{bbm}
\usepackage{amsxtra,amsmath,amssymb,amsfonts,exscale,amsthm,amscd}
\usepackage{xcolor}
\usepackage{float}
\usepackage{epic}
\usepackage{latexsym}
\usepackage{geometry}
\usepackage{graphicx}
\usepackage{subfigure}
\usepackage{natbib}
\usepackage{txfonts}
\usepackage{psfrag}
\usepackage{showkeys}
\usepackage{pifont}
\usepackage{ifthen}
\usepackage{ifpdf}
\usepackage{slashbox}
\usepackage{tabularx}
\usepackage{booktabs}
\usepackage{dcolumn}
\usepackage{multirow}
\usepackage{wrapfig}
\usepackage{multicol}
\usepackage{verbatim}
\usepackage[shortlabels]{enumitem}
\usepackage{layouts}
\usepackage{stmaryrd}
\usepackage{xcolor}
\usepackage{tikz}
\usepackage{diagbox}
\usepackage{mathrsfs}
\usepackage{ulem}

\providecommand{\Div}{\operatorname{div}}          
\providecommand{\curl}{\operatorname{{\bf curl}}}  




\providecommand*{\Dist}[2]{\operatorname{dist}({#1};{#2})}   
\providecommand*{\Dist}[2]{\Dist{#1}{#2}}





\newcommand{\Bf}{{\boldsymbol{f}}}
\newcommand{\Bg}{{\boldsymbol{g}}}

\newcommand{\Bn}{{\boldsymbol{n}}}

\newcommand{\Bp}{{\boldsymbol{p}}}

\newcommand{\Bu}{{\boldsymbol{u}}}
\newcommand{\Bv}{{\boldsymbol{v}}}
\newcommand{\Bw}{{\boldsymbol{w}}}
\newcommand{\Bx}{{\boldsymbol{x}}}
\newcommand{\By}{{\boldsymbol{y}}}
\newcommand{\Bz}{{\boldsymbol{z}}}



\newcommand{\BA}{{\boldsymbol{A}}}

\newcommand{\BC}{{\boldsymbol{C}}}

\newcommand{\BH}{{\boldsymbol{H}}}

\newcommand{\BL}{{\boldsymbol{L}}}

\newcommand{\BU}{{\boldsymbol{U}}}
\newcommand{\BV}{{\boldsymbol{V}}}

\newcommand{\BX}{{\boldsymbol{X}}}



\newcommand{\xibf}{\boldsymbol{\xi}}

\newcommand{\chibf}{\boldsymbol{\chi}}





\newcommand{\Ce}{\mathcal{E}}

\newcommand{\Cl}{\mathcal{L}}

\newcommand{\Cp}{\mathcal{P}}

\newcommand{\Ct}{\mathcal{T}}



\newcommand{\bbR}{\mathbb{R}}






\newcommand*{\N}[1]{\left\|{#1}\right\|}     
\newcommand*{\SN}[1]{\left|{#1}\right|}      










\newcommand*{\Lp}[2][\defaultdomain]{L^{#2}({#1})}

\newcommand*{\Ltwo}[1][\defaultdomain]{\Lp[#1]{2}}





\newcommand*{\Hm}[2][\defaultdomain]{H^{#2}({#1})}
\newcommand*{\Hmv}[2][\defaultdomain]{\BH^{#2}({#1})}

\newcommand*{\Hone}[1][\defaultdomain]{\Hm[#1]{1}}
\newcommand*{\Honev}[1][\defaultdomain]{\Hmv[#1]{1}}







\newcommand*{\jump}[2][]{\left \llbracket{#2}\right\rrbracket_{#1}}


\newcommand{\D}{\mathrm{d}}

\newcommand{\ol}{\overline}
\newcommand{\ul}{\underline}

\newcommand{\be}{\begin{eqnarray}}
	\newcommand{\ee}{\end{eqnarray}}

\newcommand{\ben}{\begin{eqnarray*}}
	\newcommand{\een}{\end{eqnarray*}}


\newtheorem{theorem}[subsection]{\sc Theorem}

\newtheorem{remark}[subsection]{\sc Remark}
\newtheorem{algorithm}[subsection]{\sc Algorithm}
\newtheorem{example}[subsection]{\sc Example}

\journal{Journal of Computational Physics}

\begin{document}

\begin{frontmatter}

\title{A high-order finite element method for nonlinear convection-diffusion equation on time-varying domain}

\author[add1,add2]{Chuwen Ma}
\ead{chuwenii@lsec.cc.ac.cn}
\author[add1,add2]{Weiying Zheng\corref{cor}\fnref{zwy}}
\ead{zwy@lsec.cc.ac.cn}
\address[add1]{LSEC, NCMIS, Institute of Computational Mathematics and Scientific/Engineering Computing, Academy of Mathematics and Systems Science, Chinese Academy of Sciences, Beijing, 100190, China.}
\address[add2]{School of Mathematical Science, University
of Chinese Academy of Sciences, Beijing, 100049, China.}
\cortext[cor]{Corresponding author: Tel: +86-10-82541735.}

\fntext[zwy]{This author was supported in part by the National Science Fund for Distinguished Young Scholars 11725106 and by China NSF grant 11831016.}

\begin{abstract}
A high-order finite element method is proposed to solve the nonlinear convection-diffusion equation on a time-varying domain whose boundary is implicitly driven by the solution of the equation. The method is semi-implicit in the sense that the boundary is traced explicitly with a high-order surface-tracking algorithm, while the convection-diffusion equation is solved implicitly with high-order backward differentiation formulas and fictitious-domain finite element methods.
By numerical experiments on severely deforming domains, we show that optimal convergence orders are obtained in energy norm for both third-order and fourth-order methods.
\end{abstract}

\begin{keyword}
Nonlinear convection-diffusion equation, free-surface problem, SBDF scheme, fictitious-domain finite element method, surface-tracking algorithm.
\vspace{2mm}

{\em MSC:} \; 65M60, 65L06, 76R99
\end{keyword}
\end{frontmatter}

\section{Introduction}
Time-varying interface problems are frequently encountered in numerical simulations for multi-phase flows and fluid-structure interactions.
Transient deformation of material region poses a great challenge to the design of
high-order numerical methods for solving such kind of problems. The problem becomes even more difficult when the evolution of the solution and the deformation of the domain have strong and nonlinear interactions.

In this paper, we study the nonlinear convection-diffusion equation on a time-varying domain
\begin{subequations}\label{cd-model}
\begin{align}	
\partial_t\Bu+ \Bu \cdot \nabla \Bu- \nu\Delta\Bu  = \Bf&
 	\quad \text{in}\;\; \Omega_t, \label{cd-eqn}   \\
\partial_\Bn \Bu =0& \quad \text{on}\;\;\Gamma_t, \label{cd-bc}\\
\Bu(0) =\Bu_0& \quad \text{in}\;\;\Omega_0,  \label{cd-ic}
\end{align}	
\end{subequations}
where $\Omega_t\subset \bbR^2$ is a bounded domain for each $t\ge 0$,
$\Gamma_t = \partial \Omega_t$ the boundary of $\Omega_t$, $\Bf(\Bx,t)$ the source term, $\nu$ the diffusion coefficient which is a positive constant, $\Bu_0$ the initial value, and $\Omega_0$ the initial shape of the domain. Moreover, $\partial_t\Bu$ denotes the partial derivative $\frac{\partial\Bu}{\partial t}$ and $\partial_\Bn\Bu$ denotes the directional derivative $\frac{\partial\Bu}{\partial\Bn}$ on $\Gamma_t$ with $\Bn$ being the unit outer normal on $\Gamma_t$. Equation \eqref{cd-eqn} can be viewed as a simplification of the momentum equation of the incompressible Navier-Stokes equations by removing the pressure term.

The flow velocity $\Bu$ drives the deformation of $\Omega_t$, that is,
$\Omega_t = \big\{\BX(t;0,\Bx):
\forall\,\Bx\in\Omega_0\big\}$,
where $\BX(t;0,\Bx)$ is the solution to the
ordinary differential equation (ODE)
\begin{equation}\label{eq:X}
\frac{\D}{\D t} \BX(t;0,\Bx)= \Bu\big(\BX(t;0,\Bx),t\big),\qquad
\BX(0;0,\Bx) = \Bx.
\end{equation}
Throughout the paper, we assume that the exact solution $\Bu(\Bx,t)$ is $\BC^{2}$-smooth in both $\Bx$ and $t$. Classic theories for ODEs show that \eqref{eq:X} has a unique solution.
The nonlinearity of the problem appears not only in the convection term, but also in the interaction between $\Bu$ and $\Omega_t$.
To design high-order numerical methods for \eqref{cd-model}, one
must also seek a high-order surface-tracking algorithm for building the varying domain with numerical solutions.
An entirely high-order solver requires that the surface-tracking algorithm has at least the same order of accuracy as the numerical method for solving the partial differential equation (PDE) \cite{ma21}.

If the driving velocity of surface is known, there are extensive works in the literature on tracing and representing the surface approximately. We refer to \cite{ale99,gig06,her08,mor17,osh88,osh00} and the references therein for level set methods, to \cite{har00,hir81,kum21} for volume-of-fluid methods,
to \cite{ahn07,ahn09,dya08} for moment-of-fluid methods, and
to \cite{try01,zha01} for front-tracking method. In a series of works \cite{zha14,zha16,zha17,zha18}, Zhang developed the cubic MARS (Mapping and Adjusting Regular Semi-analytic sets) algorithm for tracking the loci of free surface. The algorithm traces control points on the surface and
forms an approximate surface with cubic spline interpolations.
It can achieve high order accuracy by controling distances between neighboring control points.
In the previous work \cite{ma21}, we developed a high-order finite element method for solving linear advection-diffusion equation on Eulerian meshes. In that work, the driving velocity of the domain is explicitly given so that the surface-tracking procedure can be implemented easily with cubic MARS algorithm. A thorough error analysis for the finite element solution is conducted by considering all errors from interface-tracking, spatial discretization, and temporal integration. However, for the nonlinear problem \eqref{cd-model}, what's more difficult is that {\it the driving velocity of the boundary is the unknown solution to the PDE}. To maintain the overall high-order accuracy of numerical method, we propose a high-order implicit-explicit (IMEX) scheme which advances the boundary explicitly, while solves \eqref{cd-model} implicitly.

During the past three decades, numerical schemes for solving interface problems on unfitted grids are very popular in the literature. To mention some of them, we refer
to \cite{bre12,mit05,pes77} for the immersed boundary method, to \cite{lev94,li06} for the immersed interface method, to \cite{guo21,li98,li03,lin09} for the immersed finite element method, and to \cite{bec09,han02,hua17,leh15,liu20,leh13,wu19} for Nitsche extended finite element methods. The main idea is to double the degrees of freedom  on interface elements and add penalty terms to enforce interface conditions weakly.
Similar ideas can also be found in fictitious domain methods which allow the boundary to cross mesh elements  \cite{bur10,bur12,jar09,mas14}. To the best of our knowledge, there are very few papers in the literature on high-order methods for free-surface problems where the motion of domain is driven by the solution to the equation, particularly, for those problems on severely deforming domains. The purpose of this paper is to develop third- and fourth-order methods for
solving the nonlinear convection-diffusion equation \eqref{cd-model} on time-varying domains.

For time integration, we propose an IMEX scheme by applying the $k^{\rm th}$-order Semi-implicit Backward Difference Formula (SBDF-$k$) \cite{asc95} to a Lagrangian form of the convection-diffusion equation. The spatial arguments of the velocity are implicitly defined by means of flow maps.
For spatial discretization, we adopt high-order fictitious-domain finite element methods using cut elements. The methods are based on a fixed Eulerian mesh that covers the full movement range of the deforming domain.
The novelties of this work are listed as follows.
\begin{itemize}[leftmargin=5mm]
\item [1.] A high-order IMEX scheme is proposed for solving \eqref{cd-model}, where the time integration is taken implicitly along characteristic curves, while the computational domains are formed explicitly with high-order surface-tracking algorithm.

\item [2.] By numerical examples on severely deforming domains, we show that the SBDF-$3$ and  SBDF-$4$ schemes have optimal convergence orders.
\end{itemize}
\vspace{2mm}

The rest of the paper is organized as follows.
In section~2, we introduce the semi-discrete SBDF-$k$ schemes, $1\le k\le 4$, for solving problem \eqref{cd-model}. In section~3, we
introduce the fully discrete finite element method for solving \eqref{cd-model} on a fixed Eulerian mesh. A high-order surface-tracking algorithm is presented to build the computational domains explicitly with numerical solutions. In section~4, we present an efficient algorithm for computing backward flow maps.
Three numerical examples are presented to show that the proposed numerical methods
have overall optimal convergence orders for $k=3,4$.

\section{The semi-discrete schemes}

For any fixed $\Bx\in\Omega_0$, the chain rule and \eqref{eq:X} indicate that the material derivative of $\Bu$ satisfies
\begin{equation}\label{eq:DuDt}
\frac{\D}{\D t}\Bu\big(\BX(t;0,\Bx),t\big) = \partial_t \Bu\big(\BX(t;0,\Bx),t\big)
+ \Bu\big(\BX(t;0,\Bx),t\big) \cdot\nabla \Bu\big(\BX(t;0,\Bx),t\big),
\end{equation}
where $\nabla\Bu(\Bz,t)$ denotes the gradient of $\Bu$ with respect to $\Bz$ and $\partial_t\Bu(\Bz,t)$ denotes the partial derivative of $\Bu$ with respect to $t$. Similarly, $\Delta\Bu(\Bz,t)$ also denotes the Laplacian of $\Bu$ with respect to $\Bz$. For convenience, we omit the arguments of $\Bu$ without causing confusions and write problem \eqref{cd-model} into a compact form
\begin{subequations}\label{cd1}
\begin{align}	
\frac{\D\Bu}{\D t}- \nu\Delta\Bu  = \Bf&
 	\quad \text{in}\;\; \Omega_t, \label{cd1-eqn}   \\
\partial_\Bn \Bu =0& \quad \text{on}\;\;\Gamma_t, \label{cd1-bc}\\
\Bu|_{t=0} =\Bu_0& \quad \text{in}\;\;\Omega_0.  \label{cd1-ic}
\end{align}	
\end{subequations}
Through \eqref{eq:DuDt}, problems \eqref{eq:X} and \eqref{cd1} form a coupled system of initial-boundary value problems.

Now we describe the SBDF-$k$ for solving \eqref{eq:X} and \eqref{cd1}.
Let $0=t_1\le t_2\le \cdots \le t_N=T$ be the uniform partition of the interval $[0,T]$ with step size $\tau=T/N$. For convenience, we denote the exact flow map from $\Omega_{t_m}$ to $\Omega_{t_n}$ by $\BX^{m,n}:=\BX(t_n;t_m,\cdot)$ for all $0\le m\le n\le N$.
The inverse of $\BX^{m,n}$ is denoted by
\ben
\BX^{n,m} := \big(\BX^{m,n}\big)^{-1}.
\een
For each $k\le n\le N$, we are going to study the approximate solution $\BX^{0,n}_\tau(\Bx)$ of \eqref{eq:X} and the approximate solution $\Bu^n$ of \eqref{eq:DuDt} at $t_n$.

First we assume that the discrete approximations $\BX^{0,m}_\tau(\Bx)$ of $\BX(t_m;0,\Bx)$ have already been obtained for all $0\le m<n$ and all $\Bx\in\Omega^0\equiv \Omega_0$. The approximate domain at $t_m$ is defined as the range of $\BX^{0,m}_\tau$, namely,
\ben
\Omega^m=\big\{\BX^{0,m}_\tau(\Bx): \forall\,\Bx\in\Omega^0\big\}.
\een
Suppose that $\BX^{0,m}_\tau$ provides a good approximation to $\BX^{0,m}$.
The boundedness of $\BX^{0,m}$ implies that $\BX^{0,m}_\tau$ is one-to-one and has a bounded inverse.
For any $0\le j\le m$, the maps $\BX^{j,m}_\tau$: $\Omega^{j}\to \Omega^m$ and
$\BX^{j,m}$: $\Omega_{t_j}\to \Omega_{t_m}$ are defined as
\begin{equation}\label{Xmn}
\BX_\tau^{j,m} := \BX_\tau^{0,m}\circ \big(\BX_\tau^{0,j}\big)^{-1},\qquad
\BX^{j,m} := \BX^{0,m}\circ \big(\BX^{0,j}\big)^{-1}.
\end{equation}
The inverses of $\BX_\tau^{j,m}$ is denoted by
\begin{equation}\label{Xmj}
\BX_\tau^{m,j} := \big(\BX_\tau^{j,m}\big)^{-1}.
\end{equation}

Next we suppose that the approximate solutions $\Bu^m$ of \eqref{cd1} are obtained for $0\le m<n$ and that each $\Bu^m$ is supported on $\ol{\Omega^m}$. In the $n^{\rm th}$ time step, the explicit $k^{\rm th}$-order scheme for solving \eqref{eq:X} has the form
\begin{equation}\label{eq:DnX0}
a_0^k \BX_\tau^{0,n}(\Bx)
= \tau \sum_{i =1}^{k} b_i^k \Bu^{n-i}\circ\BX_\tau^{0,n-i}(\Bx)
-\sum_{i=1}^k a_i^k \BX_\tau^{0,n-i}(\Bx), \qquad \forall\,\Bx\in \Omega^0.
\end{equation}
where the coefficients $a_i^k,b_i^k$ are listed in Table~\ref{tab:SBDF}.
Using \eqref{Xmj}, we can rewrite \eqref{eq:DnX0} equivalently as follows
\begin{equation}\label{eq:DnX}
a_0^k \BX_\tau^{n-1,n}(\Bx)
= \tau \sum_{i =1}^{k} b_i^k \Bu^{n-i}\circ\BX_\tau^{n-1,n-i}(\Bx)
-\sum_{i=1}^k a_i^k \BX_\tau^{n-1,n-i}(\Bx), \qquad \forall\,\Bx\in \Omega^{n-1}.
\end{equation}
Clearly $\BX^{n-1,n}_\tau$ defines the approximate domain of the $n^{\rm th}$ time step
\begin{equation}\label{Dn}
\Omega^n=\big\{\BX^{n-1,n}_\tau(\Bx): \forall\,\Bx\in\Omega^{n-1}\big\},\qquad
\Gamma^n:=\partial\Omega^n.
\end{equation}
Define $\Bf^n:=\Bf(\cdot,t_n)$. The implicit $k^{\rm th}$-order scheme for solving \eqref{cd1} is given by
\begin{subequations}\label{un-pro}
\begin{align}
a_0^k\Bu^n-\tau\nu\Delta\Bu^{n}=\tau\Bf^n - \sum_{i=1}^ka_i^k\Bu^{n-i}
\circ\BX^{n,n-i}_\tau\quad
&\hbox{in}\;\;\Omega^n, \label{un-eqn}\\
\partial_\Bn\Bu^n =0 \quad
&\hbox{on}\;\;\Gamma^n. \label{un-bc}
\end{align}
\end{subequations}

\begin{table}[http!]
\center
\caption{Coefficients for SBDF schemes.}\label{tab:SBDF}
\setlength{\tabcolsep}{2mm}
\begin{tabular}{ |c|l|l|l|l|l|}
\hline
\diagbox[dir=SE]{$k$}{$\big(a_i^k,b_i^k\big)$}{$i$}  &\qquad $0$  &\quad\;\;$1$  &\qquad $2$  &\qquad $3$   &\qquad $4$  \\  \hline
$1$      & $(1,\times)$       &$(-1,\,1)$  &$(0,\,0)$       &$(0,\,0)$     &$(0,\,0)$  \\ \hline
$2$      &$(3/2,\,\times)$    &$(-2,\,2)$  &$(1/2,\,-1)$    &$(0,\,0)$     &$(0,\,0)$    \\ \hline
$3$      &$(11/6,\,\times)$   &$(-3,\,3)$  &$(3/2,\,-3)$    &$(-1/3,\,1)$  &$(0,\,0)$     \\ \hline
$4$      &$(25/12,\,\times)$  &$(-4,\,4)$  &$(3,\,-6)$      &$(-4/3,\,4)$  &$(1/4,\,-1)$  \\ \hline
\end{tabular}
\end{table}

\section{High-order finite element methods}
\label{sec:fem}

In practice, we are not able to build $\Omega^n$ with \eqref{eq:DnX} and \eqref{Dn}.
The purpose of this section is to propose a surface-tracking algorithm to build an approximate domain $\Omega^n_h$
and to propose a fictitious-domain finite element method using cut elements (\cite{bur10,bur12}) for computing the discrete solution $\Bu^n_h\in\Honev[\Omega^n_h]$ in each time step.
Let $D\subset\bbR^2$ be an open domain satisfying $\bar\Omega_t\subset D$ for all $0\le t\le T$. Let $\Ct_h$ be the uniform partion of $\bar D$, which consists of close squares of side-length $h$.
\vspace{1mm}

The algorithms for computing $\Bu^n_h$ and $\Omega^n_h$ will be described successively.
\begin{center}
\fbox{\parbox{0.975\textwidth}
{First we assume that all the quantities below have been obtained for all $0\le m<n$,
\begin{enumerate}[leftmargin=5mm]
\item the computational domains $\Omega^m_h$ and the finite element solutions $\Bu^m_h\in\Honev[\Omega^m_h]$,

\item the discrete forward maps $\BX^{m-1,m}_h$: $\bar\Omega^{m-1}_h\to
\BX^{m-1,m}(\bar\Omega^{m-1}_h)$, and

\item the discrete backward maps $\BX^{m,m-1}_h$: $\bar \Omega^{m}_h
\to \bar \Omega^{m-1}_h$.
\end{enumerate}\vspace{1mm}
Our task is to establish the computational domain $\Omega^n_h$, the finite element solution $\Bu^n_h$, the forward map $\BX^{n-1,n}_h$, and the backward map $\BX^{n,n-1}_h$.}}
\end{center}

\subsection{Finite element spaces}

Let $\Gamma^m_h=\partial\Omega^m_h$ denote the boundary of $\Omega^m_h$ for convenience.
We introduce an open domain $\tilde\Omega^m_h$ which is larger than $\Omega^m_h$
\begin{equation}\label{domain-tm}
\tilde\Omega^m_h = \left\{\Bx + t\Bn(\Bx): 0\le t<h/4,\;\Bx\in\Gamma^m_h\right\},
\end{equation}
where $\Bn(\Bx)$ is the unit normal of $\Gamma^m_h$ at $\Bx$ and points to the exterior of $\Omega^m_h$.
Without loss of generality, we assume that $\tilde\Omega^m_h\subset D$.
Let $\Ct^m_{h,I}$ denote the set of interior elements of $\Omega^m_h$, namely,
\ben
\Ct^m_{h,I} = \left\{K\in\Ct_h:\; K\subset \Omega^m_h\right\}.
\een
The mesh $\Ct_h$ induces a cover of $\tilde\Omega^m_h$ and a cover of $\Gamma^m_h$, which are defined as follows
\ben
\Ct^m_h := \big\{K\in\Ct_h:\;
\mathrm{area}(K\cap\tilde\Omega^m_h) >0\big\}, \qquad
\Ct^m_{h,B} := \Ct^m_h\backslash \Ct^m_{h,I} .
\een
The cover $\Ct^m_h$ generates a fictitious domain which is denoted by
\ben
D^m:= \mathrm{interior}
\big(\cup_{K\in\Ct^m_h}K\big).
\een
Clearly we have $\Omega^m_h\subset\tilde\Omega^m_h\subset D^m$.
Let $\Ce_h$ denote the set of all edges in $\Ct_h$. The set of boundary-zone edges is denoted by (see Fig.~\ref{fig:Tnh})
\ben
\Ce_{h,B}^{m}= \left\{E\in\Ce_h: \; E \not\subset\partial D^m
\;\; \hbox{and}\;\; \exists K\in \Ct^m_{h,B}\;\;
\hbox{s.t.}\;\; E\subset\partial K\right\}.
\een

\begin{figure}[http!]
	\centering
	\begin{tikzpicture}[scale =2]
		\filldraw[red!70!white](0.2*3,0.2*2)--(0.2*3,0.2*3)--(0.2*2,0.2*3)--(0.2*2,0.2*7)--(0.2*3,0.2*7)--(0.2*3,0.2*8)--(0.2*4,0.2*8)--(0.2*7,0.2*8)--(0.2*7,0.2*7)--(0.2*8,0.2*7)--(0.2*8,0.2*3)--(0.2*7,0.2*3)--(0.2*7,0.2*2)--(0.2*3,0.2*2);  		
		\filldraw[yellow!70!white](0.2*3,0.2*6)--(0.2*4,0.2*6)--(0.2*4,0.2*7)--(0.2*5,0.2*7)--(0.2*6,0.2*7)--(0.2*6,0.2*6)--(0.2*7,0.2*6)--(0.2*7,0.2*5)--(0.2*7,0.2*4)--(0.2*6,0.2*4)--(0.2*6,0.2*3)--(0.2*5,0.2*3)--(0.2*4,0.2*3)--(0.2*4,0.2*4)--(0.2*3,0.2*4)--(0.2*3,0.2*5)--(0.2*3,0.2*6);
		\filldraw[red!70!white](0.2*1,0.2*4)--(0.2*2,0.2*4)--(0.2*2,0.2*6)--(0.2*1,0.2*6)--(0.2*1,0.2*4);
		\filldraw[red!70!white](0.2*2,0.2*7)--(0.2*3,0.2*7)--(0.2*3,0.2*8)--(0.2*2,0.2*8)--(0.2*2,0.2*7);
		\filldraw[red!70!white](0.2*2,0.2*2)--(0.2*3,0.2*2)--(0.2*3,0.2*3)--(0.2*2,0.2*3)--(0.2*2,0.2*2);
		\filldraw[red!70!white](0.2*7,0.2*2)--(0.2*8,0.2*2)--(0.2*8,0.2*3)--(0.2*7,0.2*3)--(0.2*7,0.2*2);
		\filldraw[red!70!white](0.2*8,0.2*4)--(0.2*9,0.2*4)--(0.2*9,0.2*6)--(0.2*8,0.2*6)--(0.2*8,0.2*4);
		\filldraw[red!70!white](0.2*7,0.2*7)--(0.2*8,0.2*7)--(0.2*8,0.2*8)--(0.2*7,0.2*8)--(0.2*7,0.2*7);
		\draw[black, thick] (1,1) ellipse [x radius=0.56cm, y radius=0.5cm];
	    \draw[yellow,thick] (1,1) ellipse [x radius=0.62cm, y radius = 0.56cm];
		\filldraw[step =0.2cm,gray,thin] (0,0) grid (2cm,2cm);
		\draw [blue, ultra thick] (0.2*3,0.2*3)--(0.2*3,0.2*7)--(0.2*7,0.2*7)--(0.2*7,0.2*3)--(0.2*3, 0.2*3);
		\draw [blue,ultra thick] (0.2*2,0.2*4)--(0.2*3,0.2*4);
		\draw [blue,ultra thick] (0.2*2,0.2*5)--(0.2*3,0.2*5);
		\draw [blue,ultra thick] (0.2*2,0.2*6)--(0.2*3,0.2*6);
		\draw [blue,ultra thick] (0.2*2,0.2*4)--(0.2*3,0.2*4);
		\draw [blue,ultra thick] (0.2*4,0.2*2)--(0.2*4,0.2*4)--(0.2*3, 0.2*4);
		\draw [blue,ultra thick] (0.2*4,0.2*8)--(0.2*4,0.2*6)--(0.2*3, 0.2*6);
		\draw [blue,ultra thick] (0.2*6,0.2*2)--(0.2*6,0.2*4)--(0.2*8, 0.2*4);
		\draw [blue,ultra thick] (0.2*6,0.2*8)--(0.2*6,0.2*6)--(0.2*8, 0.2*6);
		\draw [blue,ultra thick] (0.2*5,0.2*2)--(0.2*5,0.2*3);
		\draw [blue,ultra thick] (0.2*7,0.2*5)--(0.2*8,0.2*5);
		\draw [blue,ultra thick] (0.2*5,0.2*7)--(0.2*5,0.2*8);
		\draw [blue,ultra thick] (0.2*1,0.2*5)--(0.2*2,0.2*5);
		\draw [blue,ultra thick] (0.2*2,0.2*4)--(0.2*2,0.2*6);
		\draw [blue,ultra thick] (0.2*8,0.2*5)--(0.2*9,0.2*5);
		\draw [blue,ultra thick] (0.2*8,0.2*4)--(0.2*8,0.2*6);
		\draw [blue,ultra thick] (0.2*2,0.2*7)--(0.2*3,0.2*7)--(0.2*3,0.2*8);
		\draw [blue,ultra thick] (0.2*7,0.2*8)--(0.2*7,0.2*7)--(0.2*8,0.2*7);
		\draw [blue,ultra thick] (0.2*2,0.2*3)--(0.2*3,0.2*3)--(0.2*3,0.2*2);
		\draw [blue,ultra thick] (0.2*8,0.2*3)--(0.2*7,0.2*3)--(0.2*7,0.2*2);
	\end{tikzpicture}
	\caption{$\Ct^m_h:$ the squares colored in red and yellow;\; $\Ct^m_{h,B}:$ the squares colored in red;\; $\Ce^m_{h,B}:$ the edges colored in blue;\; $D^m:$ the open domain colored in red and yellow; $\Gamma^m_h$: the black circle; $\partial\tilde\Omega^m_h$: the yellow circle.}
	\label{fig:Tnh}
\end{figure}
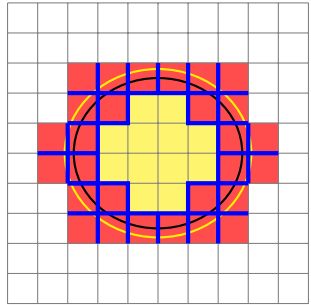

Now we define the finite element spaces as follows
\begin{align*}
V(k,\Ct_h) := \big\{v\in\Hone[D]:
v|_K\in Q_k(K),\;\forall\,K\in \Ct_h\big\}, \quad
V(k,\Ct^m_h) := \big\{v|_{D^m}:	v\in V(k,\Ct_h) \big\} ,
\end{align*}
where $Q_k$ is the space of polynomials whose degrees are no more than $k$ for each variable.
The space of piecewise regular functions over the mesh $\Ct^m_h$ is defined by
\ben
H^j(\Ct^m_h) := \big\{v\in \Ltwo[D^n]:\;
v|_K\in H^j(K),\;\forall\, K\in\Ct^m_h\big\},\qquad j\ge 1.
\een
It is clear that $V(k,\Ct_h^m)\subset H^j(\Ct^m_h)$.
We will use the notations $\BV(k,\Ct_h)=\big(V(k,\Ct_h)\big)^2$ and
$\BV(k,\Ct^m_h)=\big(V(k,\Ct^m_h)\big)^2$ in the rest of the paper.

\subsection{Forward flow map $\BX_h^{n-1,n}$}
\label{sec:Xni}

Using the one-step maps $\BX^{m,m-1}_h$ with $0\le m<n$, we can define the multi-step backward flow map ${\BX}_h^{n-1,n-i}$: $\bar\Omega^{n-1}_h\to \bar\Omega^{n-i}_h$ as
\begin{equation}\label{Xmstep}
{\BX}_h^{n-1,n-i} :={\BX}_h^{n-i+1,n-i}\circ{\BX}_h^{n-i+2,n-i+1}\circ \cdots\circ {\BX}_h^{n-1,n-2},\qquad 1\le i\le k.
\end{equation}
Similar to \eqref{eq:DnX}, we first define the forward flow map at $t_n$
\begin{equation}\label{hXnh}
\BX^{n-1,n}_h(\Bx)
:=\frac{1}{a_0^k}\sum_{i=1}^k
\big[\tau b_i^k\Bu_h^{n-i}\circ\BX^{n-1,n-i}_h(\Bx)-a_i^k\BX^{n-1,n-i}_h(\Bx)\big],
\quad \forall\,\Bx\in\bar\Omega^{n-1}_h.
\end{equation}
Since $\BX^{n-1,n}_h$ represents the forward evolution of computational domain from $t_{n-1}$ to $t_n$,
we call it the {\it forward flow map}.

\subsection{The computational domain $\Omega^n_h$}
\label{sec:domain}

Next we present the surface-tracking algorithm which generates the approximate boundary $\Gamma^n_h$, or equivalently, the computational domain $\Omega^n_h$.
In \cite{zha18}, Zhang and Fogelson proposed a surface-tracking algorithm which uses cubic spline interpolation and explicit expression of driving velocity.
Here we present a modified algorithm which uses the numerical solution as the driving velocity.

Let $\Cp^0=\left\{\Bp^0_j: 0\le j\le J^0\right\}$ be the set of control points on the initial boundary  $\Gamma^0_h:=\Gamma_0$. Suppose that the arc length of $\Gamma^0_h$ between $\Bp^0_0$ and $\Bp^0_{j}$ equals to $L^0_j=j\eta$ for $1\le j\le J^0$, where $\eta:= L^0/J^0$ and $L^0$ is the arc length of $\Gamma^0_h$. For all $0\le m<n$, suppose that we are given with the set of control points $\Cp^m=\{\Bp^m_j: 0\le j\le J^m\}\subset\Gamma^m_h$ and the parametric representation $\chibf_m$ of $\Gamma^m_h$, which satisfies
\begin{equation*}
\chibf_m(L^m_j) = \Bp^m_j, \qquad
L^m_j = \sum_{i=0}^j\SN{\Bp^m_{i+1}-\Bp^m_{i}},\quad 0\le i\le J^m.
\end{equation*}

\begin{algorithm}\label{alg:mars}
{\sf
Given $n\ge 1$ and a constant $\delta\in (0,0.5]$, the surface-tracking algorithm for constructing $\Gamma^n_h$ consists of three steps.
\begin{enumerate}[leftmargin=6mm]
\item Trace forward each control point in $\Cp^{n-1}$ to obtain the new set of control points $\Cp^n= \{\Bp^{n}_j: j=0,\cdots,J^n\}$, where $\Bp^{n}_j=\BX^{n-1,n}_h(\Bp^{n-1}_j)$ and $J^n = J^{n-1}$.

\item Adjust $\Cp^n$. For each $0\le j< J^n$, let $M_j$ be the smallest integer no less than $|\Bp_{j+1}^n-\Bp^n_{j}|/\eta$.
\begin{itemize}[leftmargin=4mm]
\item If $M_j>1$, define $\Delta l_j := (L^{n-1}_{j+1}-L^{n-1}_{j})/M_j$ and update $\Cp^n$, $J^n$ as follows
\begin{equation}\label{alg-step2}
\Cp^n \leftarrow  \Cp^n \cup\big\{
\Bp^{n}_{j,m}: 1\le m < M_j \big\}, \qquad
J^n \leftarrow   J^n + M_j-1 .
\end{equation}
where $\Bp^{n}_{j,m} = \BX_h^{n-1,n}(\Bp^{n-1}_{j,m})$ and $\Bp^{n-1}_{j,m} = \chibf_{n-1}(L^{n-1}_{j}+m\Delta l_j)$.

\item Otherwise, remove control points from $\Cp^n$ as many as possible such that $\delta\eta < \SN{\Bp^n_{j+1}-\Bp^n_j}\le \eta$ holds for all $j$.
\end{itemize}
		
\item Based on the point set $\Cp^n$ and the nodal set
$\Cl^n= \big\{\sum_{j=0}^i\big|\Bp^n_{j+1}-\Bp^n_{j}\big|:0\le i< J^n\big\}$, we construct the cubic spline function
$\chibf_{n}$ and define
\begin{equation*}
\Gamma^{n}_h:=\left\{\chibf_{n}(l): l\in [0,L^{n}]\right\}, \qquad
L^n:= \sum_{j=0}^{J_n}\big|\Bp^n_{j+1}-\Bp^n_{j}\big|.
\end{equation*}
\end{enumerate}}
\end{algorithm}

\begin{remark}
Step~2 of Algorithm~\ref{alg:mars} not only adapts the interface-tracking algorithm to severely deforming domains, but also enhances the stability of the cubic spline interpolation. For low-speed flow and short-time simulations, it is also reasonable to remove Step~2 from Algorithm~\ref{alg:mars} and set $\Cl^n\equiv\Cl^0$.
\end{remark}

\subsection{Backward flow map $\BX^{n,n-1}_h$}
\label{sec:bfm}

Note that $\Omega^n_h$ is constructed with cubic spline interpolation over the point set $\Cp^n$. Generally we have $\Omega^n_h\ne \BX^{n-1,n}(\bar\Omega^{n-1}_h)$. Moreover, the computation of $\big(\BX^{n-1,n}_h\big)^{-1}$ is very time-consuming in practical computations. The backward flow map $\BX^{n,n-1}_h$ is an approximation of $(\BX^{n-1,n}_h)^{-1}$ and will be defined in two steps.

{\bf Step~1. Define an approximation $\tilde\BX^{n-1,n}_h$ of $\BX^{n-1,n}_h$.} We shall define
$\tilde\BX^{n-1,n}_h$: $\bar\Omega^{n-1}_h\to\bbR^2$ piecewise on each element $K\in\Ct^{n-1}_{h}$.

\begin{figure}[http!]
\begin{center}
\includegraphics[width=0.25\textwidth]{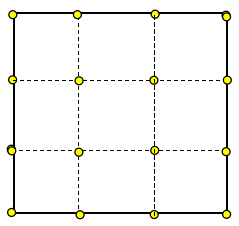}
\caption{Uniform nodal points on an interior element $K\in\Ct^{n-1}_{h,I}$.}
\label{fig:XnK}
\end{center}
\end{figure}

First we consider each interior element $K\in\Ct^{n-1}_{h,I}$. Let $\BA^K_{ij}$, $0\le i,j\le k$, be the nodal points taken uniformly on $K$ (see Fig.~\ref{fig:XnK}).
Let $P_k([0,1])$ be the space of polynomials on $[0,1]$ with degrees $\le k$ and let $\{b_0,\cdots,b_k\}$ be the basis of $P_k([0,1])$ satisfying
\ben
b_i(j/k)=\delta_{i,j},\qquad 0\le j\le k,
\een
where $\delta_{i,j}$ stands for the Kronecker delta function.
An isoparametric transform from the reference element $\hat{K}=[0,1]^2$ to $K$ is defined as
\begin{equation}\label{FK}
F_K(\xibf) := \sum_{i,j=0}^k\BA^K_{ij}b_i(\xi_1)b_j(\xi_2),\qquad
\forall\,\xibf=(\xi_1,\xi_2)\in\hat{K}.
\end{equation}
We use $\BX^{n-1,n}_h$ to trace forward each $\BA^K_{ij}$ from $t_{n-1}$ to $t_n$ and get the nodal points
\ben
\BX^{n-1,n}_h\big(\BA^K_{ij}\big),\qquad 0\le i,j\le k.
\een
They define another isoparametric transform
\begin{equation}\label{GK}
G_K(\xibf) := \sum_{i,j=0}^k\BX^{n-1,n}_h\big(\BA^K_{ij}\big) b_i(\xi_1)b_j(\xi_2)
\qquad \forall\,\xibf=(\xi_1,\xi_2)\in\hat{K}.
\end{equation}
We get a homeomorphism from $K$ to $K^n:=\big\{G_K(\xibf): \xibf\in\hat{K}\big\}$
\begin{equation}\label{X-iK}
\tilde\BX^{n-1,n}_K :=G_K\circ F_K^{-1} .
\end{equation}

\begin{figure}[http!]
\begin{center}
\includegraphics[width=0.25\textwidth]{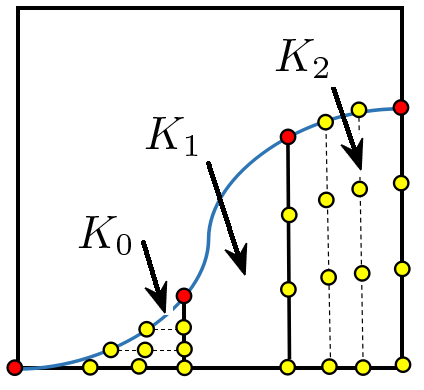}
\caption{The partition of $K\cap\Omega^{n-1}_h$ into curved polygons $K_0$, $K_1$, and $K_2$, where red dots stand for control points in $\Cp^{n-1}$. Quasi-uniform nodal points are shown on $K_0$ and $K_2$, respectively.}
\label{fig:XnKm}
\end{center}
\end{figure}

Next we consider each $K\in\Ct^{n-1}_{h,B}$. Since $\Gamma^{n-1}_h\cap K$ is represented by the piecewise cubic funtion $\chibf_{n-1}$. Let $M$ be the number of control points in the interior of $K$. We subdivide $K\cap\bar\Omega^{n-1}_h$ into $M+1$ curved polygons
\begin{equation*}
K\cap\bar\Omega^{n-1}_h=K_0\cup K_1\cup\cdots\cup K_M,\qquad
\mathring{K}_l\cap\mathring{K}_m=\emptyset\quad
\hbox{for}\;\; l\ne m.
\end{equation*}
Each $K_m$ is either a curved triangle or a curved quadrilateral with only one curved edge $\partial K_m\cap\Gamma^{n-1}_h$ (see Fig.~\ref{fig:XnKm}). We define isoparametric transforms for the two cases of $K_m$ respectively.
\vspace{1mm}
\begin{itemize}[leftmargin=5mm]
\item If $K_m$ is a curved quadrilateral (see $K_2$ in Fig.~\ref{fig:XnKm}),
we take nodal points
$\BA^{K_m}_{ij}, 0\le i,j\le k$, quasi-uniformly on $K_m$. Similar to \eqref{FK}--\eqref{GK}, we define two isoparametric transforms as
\begin{align}\label{FmGm-K}
F_{K_m}(\xibf) := \sum_{i,j=0}^k
\BA^{K_m}_{ij}b_i(\xi_1)b_j(\xi_2),\quad
G_{K_m}(\xibf) := \sum_{i,j=0}^k\BX^{n-1,n}_h\big(\BA^{K_m}_{ij}\big) b_i(\xi_1)b_j(\xi_2),\quad
\forall\,\xibf\in\hat{K}.
\end{align}

\item If $K_m$ is a curved triangle (see $K_0$ in Fig.~\ref{fig:XnKm}),
we take $(k+1)(k+2)/2$ nodal points $\BA^{K_m}_{ij},0\le i+j\le k$, quasi-uniformly on $K_m$. Let
$\hat{T}$ be the reference triangle with vertices $(0,0)$, $(1,0)$, and $(0,1)$.
The two isoparametric transforms are defined as
\begin{align}\label{FmGm-T}
F_{K_m}(\xibf) := \sum_{i+j=0}^k
\BA^{K_m}_{ij}b_{ij}(\xibf),\quad
G_{K_m}(\xibf) := \sum_{i+j=0}^k\BX^{n-1,n}_h
\big(\BA^{K_m}_{ij}\big) b_{ij}(\xibf),\quad
\forall\,\xibf\in\hat{T},
\end{align}
where $b_{ij}\in P_k(\hat{T})$ satisfies $b_{ij}(l/k,m/k)=\delta_{i,l}\delta_{j,m}$ for two integers satisfying $0\le l+m\le k$.
\end{itemize}
\vspace{1mm}
In both cases, they define a homeomorphism from $K_m$ to $K^n_m :=\big\{G_{K_m}(\xibf):\xibf\in\hat{K}\big\}$
\begin{equation}\label{Kmn}
\tilde\BX^{n-1,n}_{K_m} :=G_{K_m}\circ F_{K_m}^{-1}.
\end{equation}
Therefore, we obtain a homeomorphism $\tilde\BX^{n-1,n}_K$: $K\cap\bar\Omega^{n-1}_h \to K^n:=\cup_{m=0}^M K^n_m$
\begin{equation}\label{X-bK}
\tilde\BX^{n-1,n}_K\big|_{K_m} = \tilde\BX^{n-1,n}_{K_m} .
\end{equation}
Define $\Omega^n_{\BX}:=\cup_{K\in\Ct^{n-1}_h}K^n$. Combining \eqref{X-iK} and \eqref{X-bK}, we obtain a homeomorphism $\tilde\BX^{n-1,n}_h$: $\bar\Omega^{n-1}_h\to \bar\Omega^n_{\BX}$ which is defined piecewise as follows
\begin{equation}\label{tXn}
\tilde\BX^{n-1,n}_h = \tilde\BX^{n-1,n}_K \quad \hbox{on}\;\; K\cap\bar\Omega^{n-1}_h.
\end{equation}

{\bf Step~2. Define the backward flow map $\BX^{n,n-1}_h$.} First we let
$\tilde\BX^{n,n-1}_h :=\big(\tilde\BX^{n,n-1}_h\big)^{-1}$ which is a homeomorphism from $\bar\Omega^n_{\BX}$ to $\bar\Omega^{n-1}_h$. Note that $\tilde\BX^{n,n-1}_h$ is undefined on $\Omega^n_h\backslash\Omega^n_{\BX}$.
Next we extend it to $\bar\Omega^n_h\cup\bar\Omega^n_{\BX}$ and denote the extension by the same notation.

For any $\Bx\in\partial\Omega^n_{\BX}$, let $\Bn(\Bx)$ be the unit outer normal at $\Bx\in\partial \Omega^n_{\BX}$ (see Fig.~\ref{fig:ext-Xnh}). The extension is defined by constant along $\Bn(\Bx)$, namely,
\begin{equation}\label{nx}
\tilde\BX^{n,n-1}_h\big(\Bx+t\Bn(\Bx)\big) \equiv \tilde\BX^{n,n-1}_h(\Bx)
\qquad \hbox{for all}\;\; t>0 \;\;\hbox{satisfying}\; \;
\Bx+t\Bn_\Bx\in \bar{\Omega}^n_h.
\end{equation}
Finally, we define the backward flow map as
\begin{equation}\label{bfm}
\BX^{n,n-1}_h = \tilde\BX^{n,n-1}_h\big|_{\bar\Omega^n_h}.
\end{equation}
Clearly $\BX^{n,n-1}_h$: $\bar{\Omega}^n_h\to \bar{\Omega}^{n-1}_h$ is neither surjective nor injective.

\begin{figure}[http!]
\begin{center}
\includegraphics[width=0.25\textwidth]{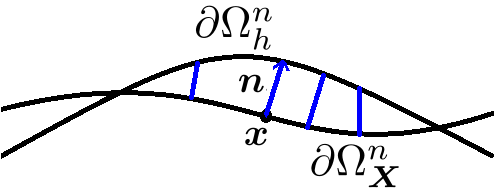}
\caption{Constant extension of $\tilde{\BX}_h^{n,n-1}$ to $\bar{\Omega}^n_h\backslash \bar\Omega^{n}_{\BX}$ along the normal direction $\Bn=\Bn(\Bx)$.}
\label{fig:ext-Xnh}
\end{center}
\end{figure}

\begin{remark}
By \eqref{FK}--\eqref{Kmn}, the computation of $\tilde\BX^{n-1,n}_h = \big(\tilde\BX^{n-1,n}_h\big)^{-1}$ requires two operations
\begin{itemize}[leftmargin=5mm]
\item tracing the nodal points $\BA^K_{ij}$, $\BA^{K_m}_{ij}$ one step forward $($see \eqref{GK} and \eqref{FmGm-K}--\eqref{FmGm-T}$)$ ,

\item computing composite isoparametric transforms $F_K\circ G_K^{-1}$ and $F_{K_m}\circ G_{K_m}^{-1}$ $($see \eqref{X-iK} and \eqref{Kmn}$)$ .
\end{itemize}
In view of \eqref{bfm}, we know that the computational complexity of $\BX^{n,n-1}_h$ is much more economic than that of the inverse map $\big(\BX^{n-1,n}_h\big)^{-1}$. Nevertheless, optimal convergence orders are observed in numerical experiments which use $\BX^{n,n-1}_h$.
\end{remark}

\subsection{Finite element scheme for computing $\Bu^n_h$}

Remember that we have obtained the computation domain $\Omega^n_h$ in subsection~\ref{sec:domain}. Let $\Ct^n_h$ be the sub-mesh covering $\Omega^n_h$ and let $\Ce_{h,B}^{n}$ be the set of boundary-zone edges.

For any edge $E\in\Ce_{h,B}^{n}$, suppose $E=K_1\cap K_2$ with $K_1,K_2\in\Ct_h^n$. Let $\Bn_{K_1}$ be the unit normal of $E$ , pointing to the exterior of $K_1$, and let $\Bn_{K_2}=-\Bn_{K_1}$. The normal jump of a scalar function $v$ across $E$ is defined by
\ben
\jump{v}(\Bx) =\lim_{\varepsilon\to 0+}
\left[v(\Bx-\varepsilon\Bn_{K_1}) \Bn_{K_1}
+v(\Bx-\varepsilon\Bn_{K_2})\Bn_{K_2}\right]\qquad \forall\,\Bx\in E.
\een
Clearly $\jump{\Bv}$ is a matrix function on $E$ if $\Bv$ is a vector function. For two vector functions $\Bv$ and $\Bw$, the Euclidean inner product of $\jump{\Bv}$ and $\jump{\Bw}$ is defined by
\ben
\jump{\Bv}:\jump{\Bw} =\jump{v_1}\cdot \jump{w_1} +
\jump{v_2}\cdot \jump{w_2}.
\een
Furthermore, we define two bilinear forms on $\BH^{k+1}(\Ct^n_h) \cap\Honev[D^n]$ as follows
\begin{align}	
\mathscr{A}^n_h(\Bw,\Bv):=\,&
\int_{\Omega^n_{h}} \nu \nabla \Bw\cdot\nabla \Bv
	+ \mathscr{J}_{h}^{n}(\Bw,\Bv) ,\label{A-nh}\\
	\mathscr{J}^n_h(\Bw,\Bv):=\,& \gamma
	\sum_{E\in \Ce_{h,B}^{n}}
	\sum_{l=1}^k \frac{h^{2l-1}}{[(l-1)!]^2}\int_E \nu
	\jump{\partial_{\Bn}^l \Bw} :
	\jump{\partial_{\Bn}^l \Bv}, \label{J-nh}
\end{align}	
where $\gamma $ is a positive constant whose value will be specified in the section for numerical experiments.
Here $\mathscr{J}_{h}^{n}$, called ``ghost penalty stabilization'' in the literature (cf. \cite{bur10-1}), is used to enhance the stability of numerical solutions.

Define $\ul{\BU^n_h}=\big[\BU^{n-k,n}_h,\cdots,\BU^{n,n}_h\big]^\top$, $\BU^{n,n}_h=\Bu^n_h$, and $\BU_h^{n-i,n}:=\Bu_h^{n-i}\circ{\BX}_h^{n,n-i}$ for $1\le i\le k$. Similar to \eqref{un-eqn}, we define the discrete BDF-$k$ difference operator as
\begin{equation}\label{def:Lambdak}
\frac{1}{\tau}\Lambda^k \ul{\BU_h^n}
:= \frac{1}{\tau}\sum_{i=0}^k a_i^k \BU_h^{n-i,n}.
\end{equation}
\begin{center}
\fbox{\parbox{0.975\textwidth}
{The finite element approximation to problem \eqref{un-pro} is to seek  $\Bu^n_h\in \BV(k,\Ct_h^{n})$ such that
\begin{align}	\label{eq:fully-discrete}
\big(\Lambda^k \ul{\BU^n_h}, \Bv_h\big)_{\Omega^n_{h}}
	+\tau  \mathscr{A}^n_h(\Bu_h^n,\Bv_h)
	=\tau (\Bf^n,\Bv_h)_{\Omega^n_{h}} \qquad
	\forall\,\Bv_h\in \BV(k,\Ct_h^{n}).
\end{align}}}
\end{center}

\begin{remark}
Suppose that the approximation of $\Gamma^n_h$ to the exact boundary $\Gamma_{t_n}$ is high-order, say
\ben
\max_{\Bx\in\Gamma_{t_n}}\min_{\By\in\Gamma^n_h} \SN{\Bx-\By}  = O(h^k), \qquad k\ge 2.
\een
From \eqref{domain-tm} we know that $\bar\Omega_{t_n}\subset\tilde\Omega^n_h$ if $h$ is small enough. This guarantees that the numerical solution $\Bu^n_h$ is well-defined on the exact domain $\bar\Omega_{t_n}$. That is why we choose an enlarged domain $\tilde\Omega^n_h\subset D^n$ to define the finite element space, instead of the tracked domain $\Omega^n_h$.

\end{remark}

To end this section, we prove the well-posedness of the discrete problem.

\begin{theorem}
Suppose that the pre-calculated solutions $\Bu^0_h,\cdots,\Bu^{k-1}_h$ are given. Then the discrete problem \eqref{eq:fully-discrete} has a unique solution
$\Bu^n_h\in \BV(k,\Ct^n_h)$ for each $k\le n\le N$.
\end{theorem}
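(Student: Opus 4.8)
The plan is to treat \eqref{eq:fully-discrete} as a square linear system on the finite-dimensional space $\BV(k,\Ct^n_h)$, for which existence and uniqueness are equivalent to triviality of the kernel. Writing $\Lambda^k\ul{\BU^n_h}=a_0^k\Bu^n_h+\sum_{i=1}^k a_i^k\,\Bu_h^{n-i}\circ\BX^{n,n-i}_h$ and noting that the $i\ge 1$ terms are data, the left-hand side is affine in the unknown $\Bu^n_h$ with linear part governed by the bilinear form
\begin{equation*}
\mathscr{B}(\Bw,\Bv):=a_0^k\,(\Bw,\Bv)_{\Omega^n_h}+\tau\,\mathscr{A}^n_h(\Bw,\Bv).
\end{equation*}
Hence it suffices to show that $\mathscr{B}$ is positive definite on $\BV(k,\Ct^n_h)$.

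First I would record semi-definiteness. Choosing $\Bv=\Bw$ gives
\begin{equation*}
\mathscr{B}(\Bv,\Bv)=a_0^k\,\N{\Bv}^2_{\Ltwov[\Omega^n_h]}+\tau\nu\,\N{\nabla\Bv}^2_{\Ltwo[\Omega^n_h]}+\tau\,\mathscr{J}^n_h(\Bv,\Bv),
\end{equation*}
and all three summands are nonnegative: $a_0^k>0$ for $1\le k\le 4$ by Table~\ref{tab:SBDF}, $\nu,\tau>0$, and $\mathscr{J}^n_h(\Bv,\Bv)$ is a weighted sum of squared normal-derivative jumps. Therefore $\mathscr{B}(\Bv,\Bv)=0$ forces each summand to vanish.

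The substance of the proof is to promote this to definiteness, i.e.\ to conclude $\Bv\equiv 0$ on the whole fictitious domain $D^n$. From $\N{\Bv}_{\Ltwov[\Omega^n_h]}=0$ I get $\Bv=0$ a.e.\ on $\Omega^n_h$; since $\Bv|_K\in Q_k$ and a polynomial vanishing on a set of positive area vanishes identically, $\Bv|_K\equiv 0$ on every $K\in\Ct_h$ with $\mathrm{area}(K\cap\Omega^n_h)>0$, in particular on all of $\Ct^n_{h,I}$ and on every cut cell meeting $\Omega^n_h$. What remains are the purely exterior cut cells in $\Ct^n_{h,B}$, on which the mass and diffusion terms give no information; here the ghost penalty is decisive. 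Since $V(k,\Ct_h)\subset\Hone[D]$ already forces $\jump{\Bv}=0$, and $\mathscr{J}^n_h(\Bv,\Bv)=0$ forces $\jump{\partial^l_{\Bn}\Bv}=0$ for $1\le l\le k$ across each $E\in\Ce^n_{h,B}$, the two $Q_k$ polynomials sharing such an edge agree together with their normal derivatives of orders $0$ through $k$ along the edge; as a $Q_k$ polynomial has degree at most $k$ in the normal variable, this matches their full normal Taylor expansions and forces the two polynomials to coincide. Propagating this identity across $\Ce^n_{h,B}$ from cells where $\Bv$ already vanishes, I would obtain $\Bv\equiv 0$ on every exterior cut cell, hence on all of $D^n$, so $\mathscr{B}$ is positive definite and the system is uniquely solvable.

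The step I expect to be the main obstacle is this last propagation: I must check that every purely exterior cut cell is linked through $\Ce^n_{h,B}$ to a cell that meets $\Omega^n_h$, so that the polynomial-matching can actually transport $\Bv=0$ outward. This connectivity is the geometric purpose of the edge set $\Ce^n_{h,B}$ and is facilitated by the narrow width $h/4$ of the band $\tilde\Omega^n_h$ in \eqref{domain-tm}, which confines exterior cut cells to a single layer adjacent to $\Omega^n_h$; making this layer argument precise, using the constructions of $\Ct^n_h$, $D^n$, and $\Ce^n_{h,B}$ from Section~\ref{sec:fem}, is the only nonroutine point, after which the elementary normal-Taylor identity closes the argument.
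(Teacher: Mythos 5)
Your proposal is correct and follows essentially the same route as the paper: both reduce unique solvability of the square finite-dimensional system to positive definiteness of the symmetric form $a_0^k\tau^{-1}(\cdot,\cdot)_{\Omega^n_h}+\mathscr{A}^n_h(\cdot,\cdot)$ on $\BV(k,\Ct^n_h)$, deduce from a vanishing quadratic form that $\Bv_h=0$ on $\Omega^n_h$ and that the normal-derivative jumps vanish across every $E\in\Ce^n_{h,B}$, and then conclude $\Bv_h\equiv 0$ on all of $D^n$. The only difference is one of completeness: the paper dismisses the final propagation step with ``the conclusion is obvious,'' whereas you spell out the polynomial-matching (normal Taylor) argument on $Q_k$ and explicitly flag the cut-cell connectivity through $\Ce^n_{h,B}$ as the one geometric point still to be made precise --- a point the paper's proof does not address at all.
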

\begin{proof}
From \eqref{A-nh} and \eqref{J-nh}, it is easy to see that the bilinear form $\mathscr{A}^n_h$ is symmetic and semi-positive on $\BV(k,\Ct^n_h)$. It suffices to show that
$a_0^k\tau^{-1}(\Bv_h,\Bv_h)_{\Omega^n_h}+\mathscr{A}^n_h(\Bv_h,\Bv_h)=0$ implies $\Bv_h\equiv 0$.
It clearly yields
\ben
\Bv_h=0 \quad \hbox{in}\;\;\Omega^n_h,\qquad
\jump{\partial_{\Bn}^l \Bv_h}=0\quad \hbox{on any}\;E\in\Ce^n_{h,B}, \quad 0\le l\le k.
\een
The conclusion is obvious.
\end{proof}

\section{Numerical experiments}

In this section, we use three numerical examples to verify the convergence orders of the proposed finite element method. The second and third examples demonstrate the robustness of the method for severely deforming domains.

\subsection{An efficient algorithm for computing $\big(\BU_h^{n,n-i}, \Bv_h\big)_{\Omega_h^n}$}
\label{sec:Uni}

Remember from \eqref{eq:fully-discrete} that we need to calculate the integrals accurately and efficiently
\ben
\big(\BU_h^{n,n-i},\Bv_h\big)_{\Omega^n_h}
=\big(\Bu_h^{n-i}\circ\BX^{n,n-i}_h,\Bv_h\big)_{\Omega^n_h} \quad\hbox{for}\;\;
1\le i\le k\;\;\hbox{and}\;\;\Bv_h\in\BV(k,\Ct^n_h).
\een
For $i\ge 2$, the computation of $\big(\Bu_h^{n-i}\circ\BX^{n,n-i}_h,\Bv_h\big)_{\Omega^n_h}$ involves multi-step map $\BX^{n,n-i}_h$ and is time-consuming.
To simplify the computation while keep accuracy, we make the replacement in calculating the integral
\ben
\big(\BU_h^{n,n-i},\Bv_h\big)_{\Omega^n_h}
\approx \big(\hat\Bu_h^{n,n-i},\Bv_h\big)_{\Omega^n_h},
\een
where $\hat\Bu_h^{n,n-i}\in \BV(k,\Ct_h^n)$ is defined in Algorithm~\ref{alg:int} via modified $L^2$-projections. The computation of $\hat\Bu_h^{n,n-i}$ only involves one-step maps and reduces the computational time significantly. \vspace{1mm}

\begin{algorithm}\label{alg:int}
The functions $\hat\Bu_h^{n,n-i}\in \BV(k,\Ct_h^n)$, $1\le i\le k$, are calculated successively.
Suppose that $\hat\Bu_h^{n-1,n-i}\in \BV(k,\Ct_h^{n-1})$, $2\le i\le k$, have been obtained.
Find $\hat\Bu_h^{n,n-i}\in \BV(k,\Ct_h^n)$ such that
\begin{equation}\label{L2pro-1}
\mathscr{M}^n_h\big(\hat\Bu_h^{n,n-i},\Bv_h\big) = (\hat{\Bu}_h^{n-1,n-i}\circ {\BX}_h^{n,n-1},\Bv_h)_{\Omega_h^n}\quad
\forall\,\Bv_h\in \BV(k,\Ct_h^n), \quad 1\le i\le k.
\end{equation}
where $\hat{\Bu}_h^{n-1,n-1}=\Bu^{n-1}_h$ and
\begin{align*}	
\mathscr{M}^n_h(\Bw,\Bv):=\,& \int_{\Omega^n_{h}} \Bw\cdot\Bv
	+\gamma \sum_{E\in \Ce_{h,B}^n}
	\sum_{l=1}^k \frac{h^{2l+1}}{(l!)^2}\int_E
	\jump{\partial_{\Bn}^l \Bw}:
	\jump{\partial_{\Bn}^l \Bv}.
\end{align*}	
\end{algorithm}

\subsection{The computation of $\big(\Bw_h\circ\BX_h^{n,n-1}, \Bv_h\big)_{\Omega_h^n}$
for any $\Bw_h\in \BV(k,\Ct_h^{n-1})$ and $\Bv_h\in \BV(k,\Ct_h^{n})$}

Using \eqref{bfm}, we approximate the integral as follows
\ben
\big(\Bw_h\circ {\BX}_h^{n,n-1},\Bv_h\big)_{\Omega^{n}_h}
=\big(\Bw_h\circ \tilde{\BX}_h^{n,n-1},\Bv_h\big)_{\Omega^{n}_h}
\approx \big(\Bw_h\circ \tilde{\BX}_h^{n,n-1},\Bv_h\big)_{\Omega^{n}_\BX}
=\big(\Bw_h\circ \big(\tilde{\BX}_h^{n-1,n}\big)^{-1},\Bv_h\big)_{\Omega^{n}_\BX},
\een
where $\Omega^{n}_\BX =\tilde\BX^{n-1,n}_h(\Omega^{n-1}_h)$. By \eqref{tXn},
$\tilde\BX^{n-1,n}_h$ is a homeomorphism from $\bar\Omega^{n-1}_h$ to $\Omega^{n}_\BX$
and provides an approximation to the forward flow map $\BX^{n-1,n}_h$. Moreover, $\tilde\BX^{n-1,n}_h$ is defined piecewise on each $K\in\Ct^{n-1}_h$.
By \eqref{FK}--\eqref{Kmn}, the integral on the right-hand side can be calculated as follows
\begin{align}
\big(\Bw_h\circ \big(\tilde{\BX}_h^{n-1,n}\big)^{-1},\Bv_h\big)_{\Omega^{n}_\BX}
=\,& \sum_{K\in\Ct^{n-1}_{h,I}}
\int_{\hat{K}}(\Bw_h\circ F_K)\cdot (\Bv_h\circ G_K) \SN{\det(\D_{\xibf} G_K)} \notag\\
&+\sum_{K\in\Ct^{n-1}_{h,B}}
\sum_{m=0}^M \int_{\hat K_m}(\Bw_h\circ F_{K_m})\cdot
(\Bv_h\circ G_{K_m})\SN{\det(\D_{\xibf} G_{K_m})}, \label{cal-wv}
\end{align}
where $\hat K_m=\hat K$ if $K_m$ is a curved quadriliteral and $\hat K_m =\hat T$ if $K_m$ is a curved triangle (see subsection~\ref{sec:bfm}).
Here $F_K$ is the isoparametric transform from the reference element $\hat{K}$ to $K$ or $K_m$, $G_K$ is the isoparametric transform from $\hat{K}$ to $K^n$ or $K^n_m$, and $\D_{\xibf} G_K$, $\D_{\xibf}G_{K_m}$ are Jacobi matrices of  $G_K$ and $G_{K_m}$,  respectively. Each integral on reference elements will be computed with Gaussian quadrature rule of the $(k+1)^{\rm th}$-order.

Since $\hat{\Bu}_h^{n-1,n-i}\in \BV(k,\Ct^{n-1}_h)$, the right-hand side of \eqref{L2pro-1} can be calculated with the formula \eqref{cal-wv}.

\subsection{Numerical examples}

In this section, we report three examples for various scenarios of the domain: rotation, vortex shear, and severe deformation. In order to test convergence orders of the method,
we take $\Bu$ as smooth functions and set the right-hand side of \eqref{cd-model} by
\ben
\Bf= \partial_t \Bu + \Bu\cdot \nabla \Bu -\nu \Delta \Bu.
\een
The Neumann condition is set by
\ben
\Bg_N:=\nabla \Bu\cdot \Bn\quad \hbox{on}\;\;\Gamma_t.
\een

For each example, we require that the final shape of domain $\Omega_T$ coincides with the initial shape $\Omega_0$. This helps us to compute the surface-tracking error
\begin{equation}\label{eq:err of Omega}
	e_\Omega = \sum_{K\in \Ct_h}\left|\mathrm{area}\big(\Omega_T\cap K\big) - \mathrm{area}\big(\Omega^N_h\cap K\big)\right|,
\end{equation}
which is also called the geometrical error \cite{Aul03,Aul04}. Here $t_N=T$ is the final time of evolution. Since the exact boundary $\Gamma_T$ and the approximate boundary $\Gamma^N_h$ are very close, calculating the exact area of domain difference directly
\ben
e_{0,\Omega} =\mathrm{area}(\Omega_T\backslash\Omega_h^N)
    +\mathrm{area}(\Omega_h^N\backslash\Omega_T)
\een
may cause an ill-conditioned problem. In fact, it is easy to see that $e_\Omega\to e_{0,\Omega}$ as $h\to 0$.

To measure the error between the exact velocity and the numerical solution,
we use
\ben
e_0=\N{\Bu(\cdot,T)-\Bu_h^{N}}_{\BL^2(\Omega_T)},
\een
since $\Omega_T=\Omega_0$ is explicitly given. However, $\Omega_{t_n}$ are unknown in intermediate time steps. We measure the $H^1$-norm errors on the computational domains by
\begin{equation*}
e_1=\bigg(\sum_{n=k}^N \tau \SN{\Bu(\cdot,t_n)-\Bu_h^n}_{\BH^1(\Omega_h^n)}^2\bigg)^{1/2}.
\end{equation*}
Throughout this section, we set the diffusion coefficient by $\nu=1$. Other constant values of $\nu$ do not influence the convergence orders for $\tau$ small enough. The penalty parameter is set by $\gamma=0.001$ in $\mathscr{J}^n_h$. In Algorithm~\ref{alg:mars}, the parameters for surface-tracking  are set by $\eta = 0.5 h$ and $\delta = 0.01$.
\vspace{1mm}

\begin{example}[Rotation of an elliptic disk]\label{ex1}
The exact velocity is set by
\begin{equation*}
	\Bu = (0.5-y,x-0.5).
\end{equation*}
The initial domain is an elliptic disk whose center is located at $(0.5,0.5)$, major axis $R_1 = 0.6$, and minor axis $R_2=0.3$. The finial time is set by $T=\pi$.
\end{example}

The moving domain $\Omega_t$ rotates half a circle counterclockwise with angular velocity $=1$ and returns to its original shape at $t=T$.
Fig.~\ref{fig:domain-ex3} shows the snapshots of $\Omega^n_h$ which are formed with Algorithm~\ref{alg:mars} at $t_n=0$, $T/8$, $T/4$, $T/2$, $3T/4$, and $T$, respectively.
From Tables~\ref{tab:ex3-k3} and \ref{tab:ex3-k4}, we find that optimal convergence orders are obtained for both the SBDF-3 and SBDF-4 schemes, namely,
\ben
e_0 \sim \tau^k,\qquad
e_1 \sim \tau^k,\qquad
e_\Omega \sim \tau^k,\qquad k=3,4.
\een

\begin{figure}[http!]
	\centering
	\subfigure[$t=0$]{
		\includegraphics[width=4.5cm]{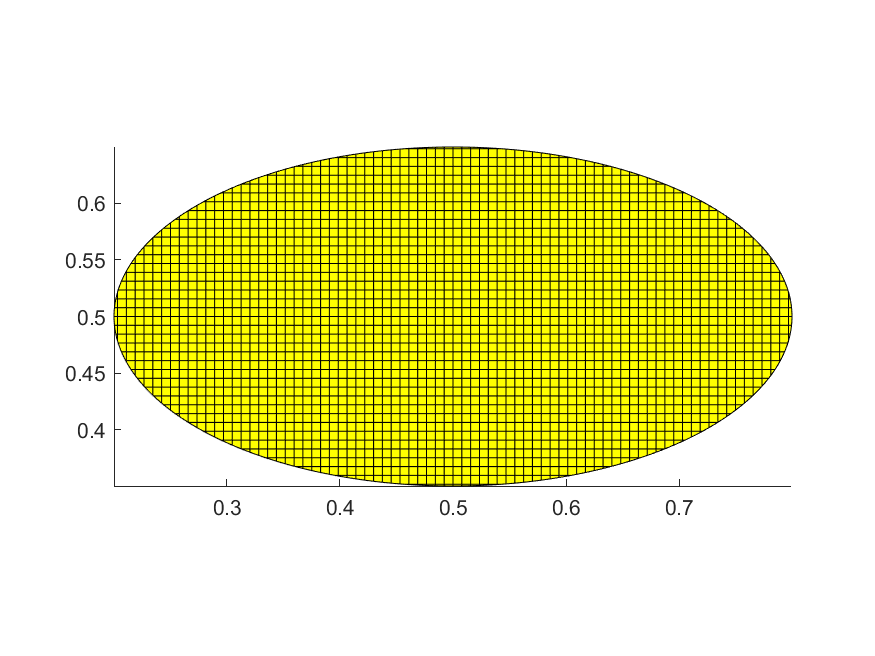}
	}
	\subfigure[$t=T/8$]{
		\includegraphics[width=4.5cm]{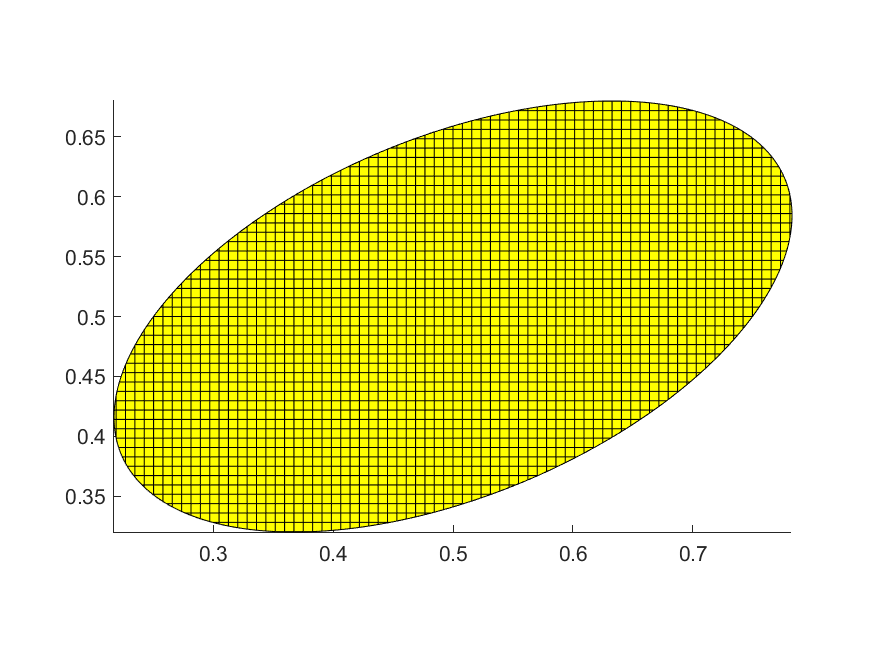}
	}
	\subfigure[$t=T/4$]{
		\includegraphics[width=4.5cm]{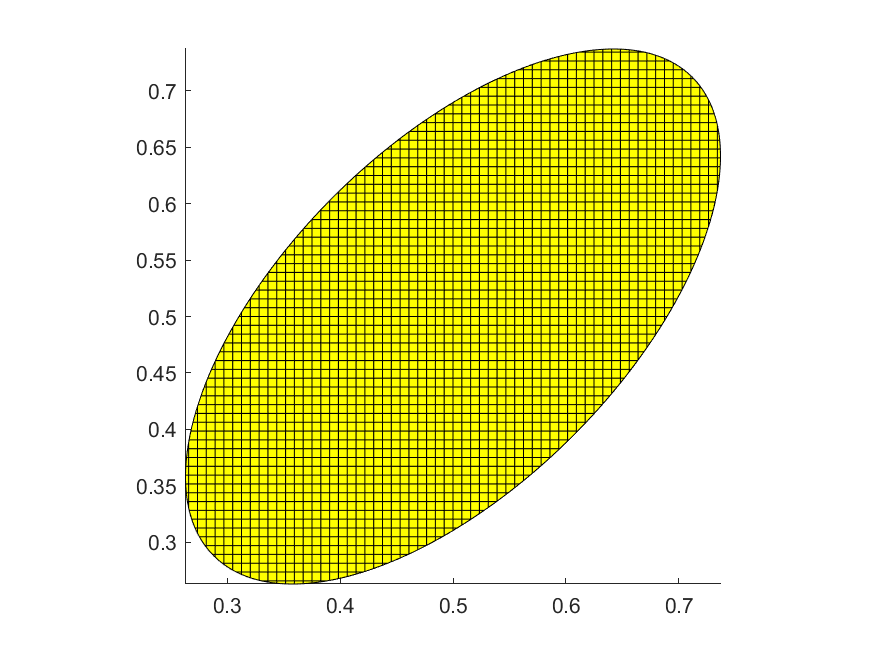}
	} \\
	\subfigure[$t=T/2$]{
		\includegraphics[width=4.5cm]{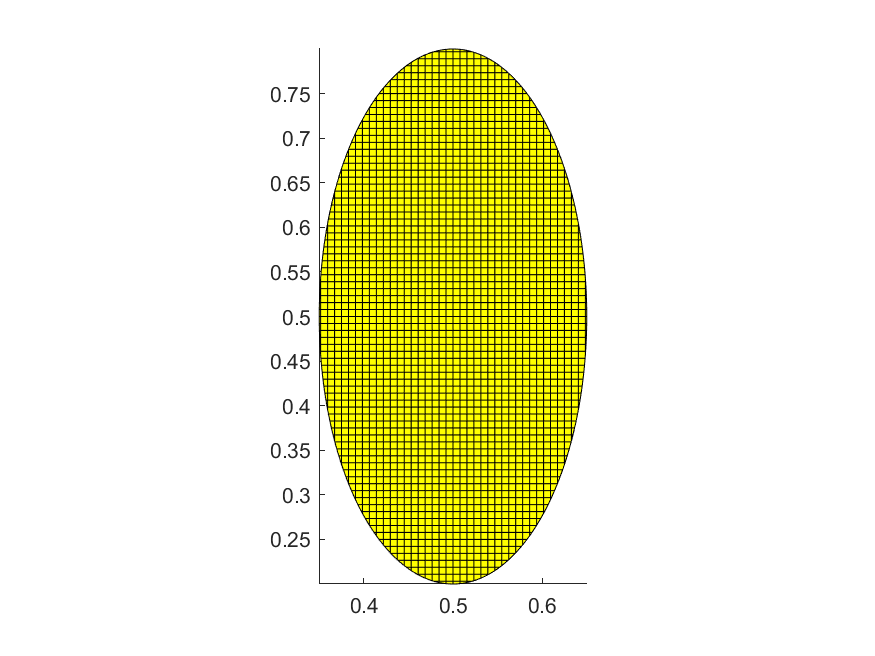}
		\label{fig3:T/2}}
	\quad
	\subfigure[$t=3T/4$]{
		\includegraphics[width=4.5cm]{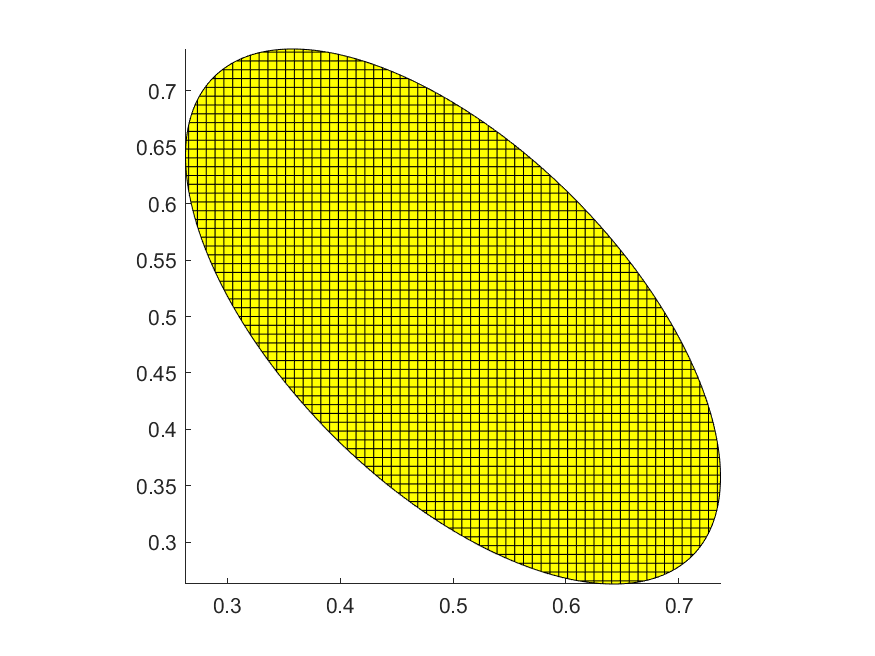}
	}
	\subfigure[t=$T$]{
		\includegraphics[width=4.5cm]{rotationN128step5.png}
	}
\caption{Computational domains $\Omega^n_h$ at different time steps
$(h=1/128,\; k=3)$.} \label{fig:domain-ex1}
\end{figure}

\begin{table}[http!]
	\center
	\caption{Convergence orders for the SBDF-3 scheme (Example~\ref{ex1}).}\label{tab:ex1-k3}
	\setlength{\tabcolsep}{3mm}
	\begin{tabular}{ |c|c|c|c|c|c|c|}
		\hline
		$h=\tau/\pi$    & $e_0$     &Order  & $e_1$     &Order & $e_\Omega$   &Order\\ \hline
	1/16  &1.06e-05  &-     &1.10e-04  &-     &4.03e-03 &-\\  \hline
	1/32  &1.36e-06  &2.93  &1.47e-05  &2.91  &5.73e-04  &2.81 \\  \hline
	1/64  &1.71e-07  &2.99  &1.89e-06  &2.96  &7.54e-05  &2.92 \\  \hline
	1/128 &2.14e-08  &3.00  &2.39e-07  &2.98  &9.64e-06  &2.97 \\  \hline
	\end{tabular}
\end{table}

\begin{table}[http!]
	\center
	\caption{Convergence orders for the SBDF-4 scheme (Example~\ref{ex1}).}\label{tab:ex1-k4}
	\setlength{\tabcolsep}{3mm}
	\begin{tabular}{ |c|c|c|c|c|c|c|}
		\hline
	$h=\tau/\pi$    & $e_0$     &Order  & $e_1$     &Order & $e_\Omega$   &Order\\ \hline
	1/16  &2.13e-06  &-  &2.17e-05  &- &4.14e-04 &-\\  \hline
	1/32   &1.34e-07  &3.98  &1.44e-06  &3.91  &2.83e-05  &3.87 \\  \hline
	1/64   &8.42e-09  &4.00  &9.26e-08  &3.96  &1.86e-06  &3.93 \\  \hline
	1/128   &5.56e-10  &3.92  &5.88e-09  &3.98  &1.20e-07  &3.96 \\  \hline
	\end{tabular}
\end{table}

\begin{example}[Vortex shear of a circular disk]\label{ex2}
The exact velocity is given by
\begin{equation*}
\Bu = \cos (\pi t/3) \big(\sin^2 (\pi x) \sin (2\pi y),\,	-\sin^2 (\pi y) \sin (2\pi x)\big).
\end{equation*}
The initial domain is a disk with radius $R=0.15$ and centering at $(0.5,0.75)$. The final time is $T=3$.
\end{example}

The domain is stretched into a snake-like region at $T/2$ and goes back to its initial shape at $t=T$.
Fig.~\ref{fig:domain-ex2} shows the snapshots of the computational domain $\Omega^n_h$ during the time evolution procedure. Tables~\ref{tab:ex1-k3} and \ref{tab:ex1-k4} show optimal convergence orders for both the SBDF-3 and SBDF-4 schemes, respectively. From Fig.~\ref{fig:domain-ex2}~(d), we find that although the domain experiences severe deformations, high-order convergence is still obtained. This demonstrates the robustness of the finite element method.

\begin{figure}[http]
	\centering
	\subfigure[t=$0$]{
		\includegraphics[width=4.5cm]{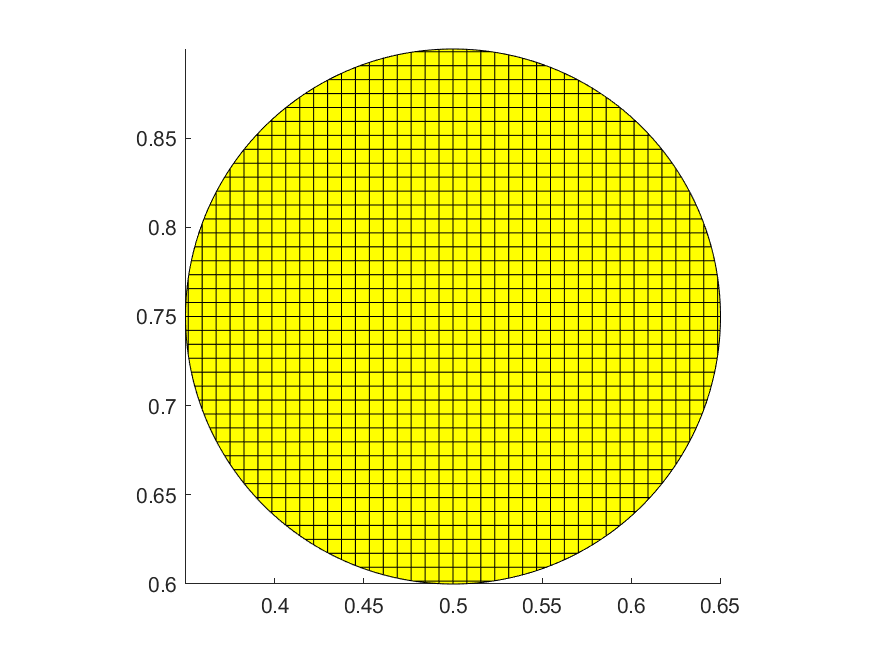}
	}
	\subfigure[$t=T/8$]{
		\includegraphics[width=4.5cm]{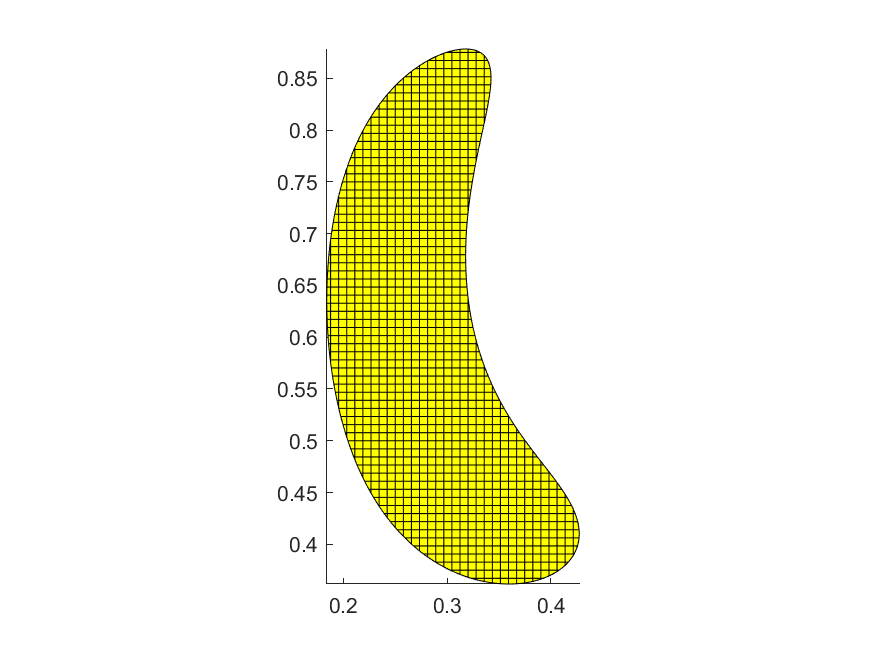}
	}
	\subfigure[$t=T/4$]{
		\includegraphics[width=4.5cm]{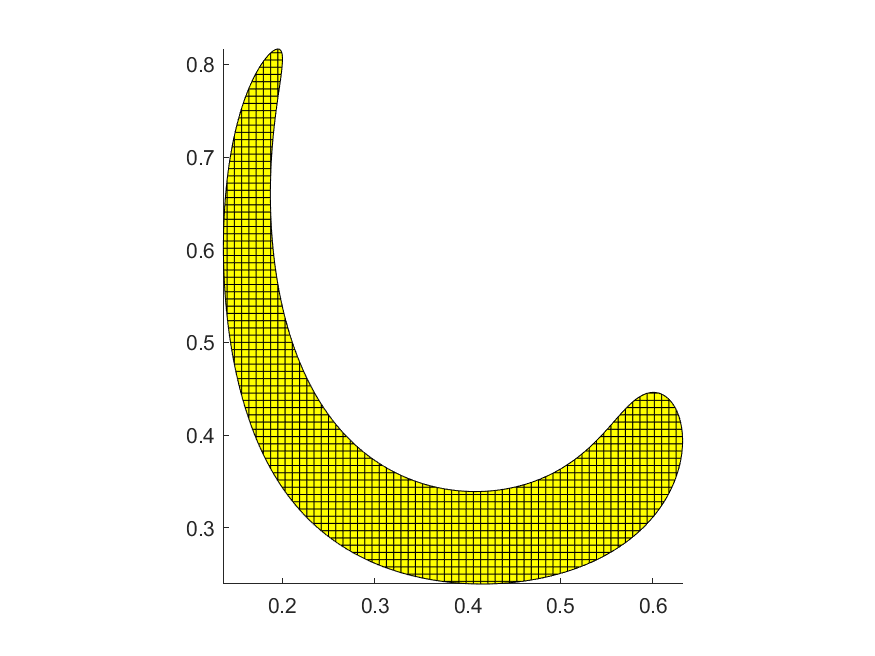}
	} \\
	\subfigure[$t=T/2$]{
		\includegraphics[width=4.5cm]{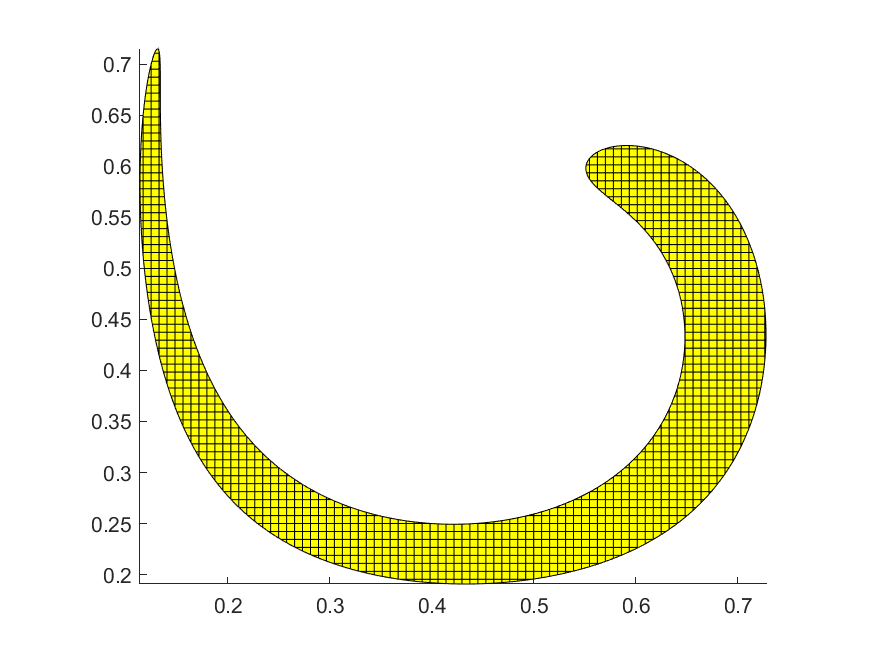}
	\label{fig1:T/2}}
	\subfigure[$t=3T/4$]{
		\includegraphics[width=4.5cm]{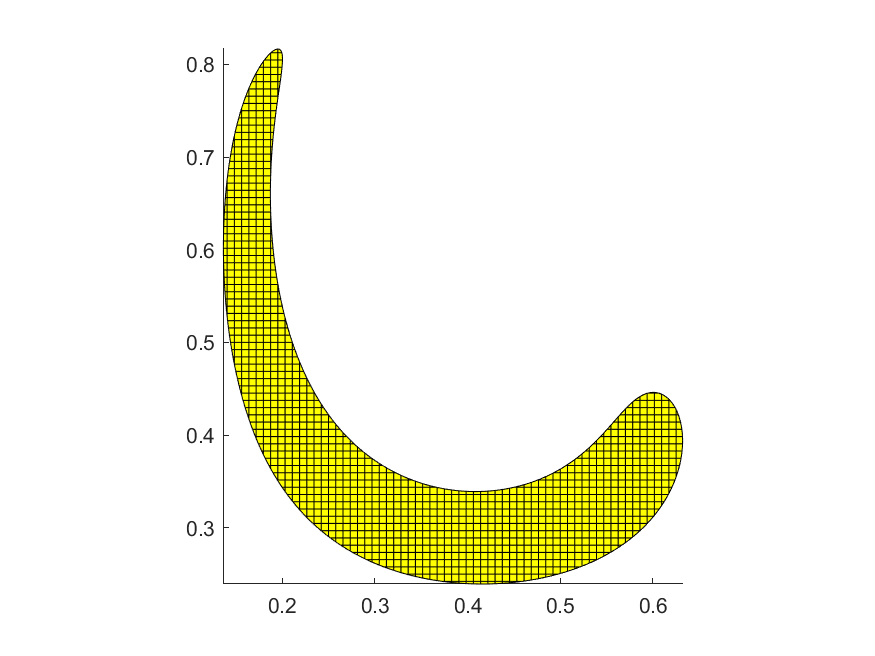}
	}
	\subfigure[$t=T$]{
		\includegraphics[width=4.5cm]{vortexstep5.png}
	}
\caption{Computational domains $\Omega^n_h$ at different time steps
$(h=1/128,\;k=3)$.} \label{fig:domain-ex2}
\end{figure}

\begin{table}[http!]
\center
\caption{Convergence orders for the SBDF-3 scheme (Example~\ref{ex2}).}\label{tab:ex2-k3}
\setlength{\tabcolsep}{3mm}
\begin{tabular}{ |c|c|c|c|c|c|c|}
\hline
$h=\tau$    & $e_0$     &Order  & $e_1$     &Order & $e_\Omega$   &Order\\ \hline
1/16  &1.17e-03  &-  &1.71e-03  &- &6.70e-03 &-\\  \hline
1/32   &1.07e-04  &3.45  &2.00e-04  &3.09  &1.03e-03  &2.71 \\  \hline
1/64   &9.59e-06  &3.48  &2.51e-05  &3.00  &1.45e-04  &2.82 \\  \hline
1/128   &9.52e-07  &3.33  &3.16e-06  &2.99  &1.89e-05  &2.94 \\  \hline
\end{tabular}
\end{table}

\begin{table}[http!]
\center
\caption{Convergence orders for the SBDF-4 scheme (Example~\ref{ex2}).}\label{tab:ex2-k4}
\setlength{\tabcolsep}{3mm}
\begin{tabular}{ |c|c|c|c|c|c|c|}
\hline
$h=\tau$    & $e_0$     &Order  & $e_1$     &Order & $e_\Omega$   &Order\\ \hline
1/16  &1.60e-03  &-  &7.40e-04  &- &1.14e-02 &-\\  \hline
1/32   &1.18e-04  &3.76  &5.80e-05  &3.67  &7.20e-04  &3.98 \\  \hline
1/64   &7.62e-06  &3.95  &3.95e-06  &3.87  &4.24e-05  &4.08 \\  \hline
1/128   &4.78e-07  &3.99  &2.53e-07  &3.97  &2.51e-06  &4.08 \\  \hline
\end{tabular}
\end{table}

\begin{example}[Deformation of a circular disk]\label{ex3}
The exact solution is given by
\begin{equation*}
\Bu= \cos(\pi t/3)
\big(\sin(2\pi x)\sin(2 \pi y),
\cos( 2\pi x)\cos(2 \pi y)\big).
\end{equation*}
The initial domain $\Omega_0$ is same to that in Example~\ref{ex2}, that is, a disk of radius $R=0.15$ and centering at $(0.5,0.5)$. The final time is set by $T=3$.
\end{example}

In this example, the deformation of the domain is even more severe than that in Example~\ref{ex2}.
At $t=T/2$, the middle part of the domain is stretched into a filament (see Fig.~\ref{fig:hT-ex3}).
At this time, the domain attains its largest deformation compared with the initial shape.
In the second half period, the domain returns to its initial shape. Moreover, we adopt a fine mesh with $h=1/256$ to capture the largest deformation of the domain.
Nevertheless, Tables~\ref{tab:ex2-k3} and \ref{tab:ex2-k4} show that quasi-optimal convergence is observed for both the SBDF-3 and SBDF-4 schemes, respectively. This indicates that
\ben
e_0 \sim \tau^k,\qquad
e_1 \sim \tau^k,\qquad
e_\Omega \sim \tau^k,\qquad k=3,4,
\een
hold asymptotically as $\tau=h\to 0$.

Finally, Fig.~\ref{fig:domain-ex2} shows the computational domains formed with the surface-tracking  Algorithm~\ref{alg:mars} during the time evolution procedure. The severe deformations demonstrate the competitive behavior of the finite element method.

\begin{table}[http!]
\center
\caption{Convergence orders for the SBDF-3 scheme (Example~\ref{ex3}).}\label{tab:ex3-k3}
\setlength{\tabcolsep}{3mm}
\begin{tabular}{ |c|c|c|c|c|c|c|}
\hline
$h=\tau$    & $e_0$     &Order  & $e_1$     &Order & $e_\Omega$   &Order\\ \hline
1/32  &9.41e-05  &-  &5.13e-04  &- &7.56e-03 &-\\  \hline
1/64   &3.88e-06  &4.60  &6.13e-05  &3.06  &1.01e-03  &2.90 \\  \hline
1/128   &3.49e-07  &3.47  &7.85e-06  &2.96  &1.28e-04  &2.98 \\  \hline
1/256   &5.25e-08  &2.73  &9.95e-07  &2.98  &1.60e-05  &3.00 \\  \hline
\end{tabular}
\end{table}

\begin{table}[http!]
\center
\caption{Convergence orders for the SBDF-4 scheme (Example~\ref{ex3}).}\label{tab:ex3-k4}
\setlength{\tabcolsep}{3mm}
\begin{tabular}{ |c|c|c|c|c|c|c|}
\hline
$h=\tau$    & $e_0$     &Order  & $e_1$     &Order & $e_\Omega$   &Order\\ \hline
1/32  &9.77e-05  &-  &2.65e-04  &- &2.57e-04 &-\\  \hline
1/64   &9.03e-06  &3.44  &1.27e-05  &4.38  &6.83e-05  &1.91 \\  \hline
1/128   &6.04e-07  &3.90  &7.77e-07  &4.03  &6.39e-06  &3.42 \\  \hline
1/256   &3.84e-08  &3.98  &5.12e-08  &3.92  &4.82e-07  &3.73 \\  \hline
\end{tabular}
\end{table}

\begin{figure}[http!]
	\centering
\includegraphics[width=12cm]{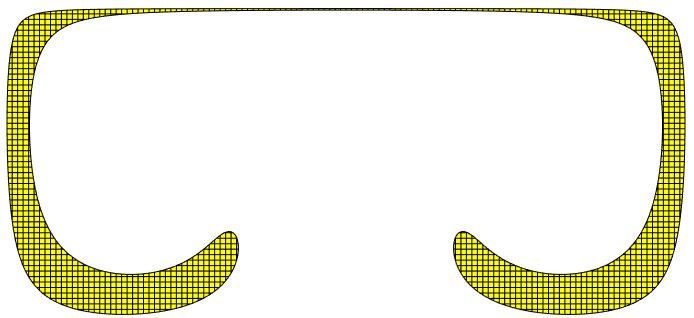}
\caption{The computational domain $\Omega^n_h$ at $t_n=T/2$
$(h=1/128,\;k=4)$.} \label{fig:hT-ex3}
\end{figure}

\begin{figure}[http!]
	\centering
	\subfigure[$t=0$]{
	\includegraphics[width=3cm]{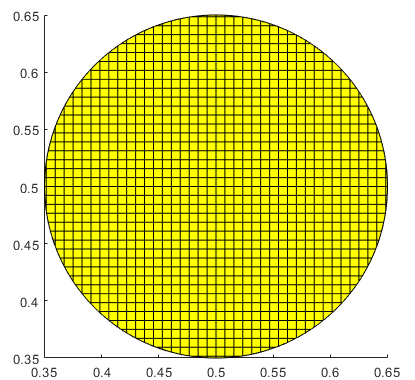}}
	\subfigure[$t=T/8$]{
	\includegraphics[width=5cm]{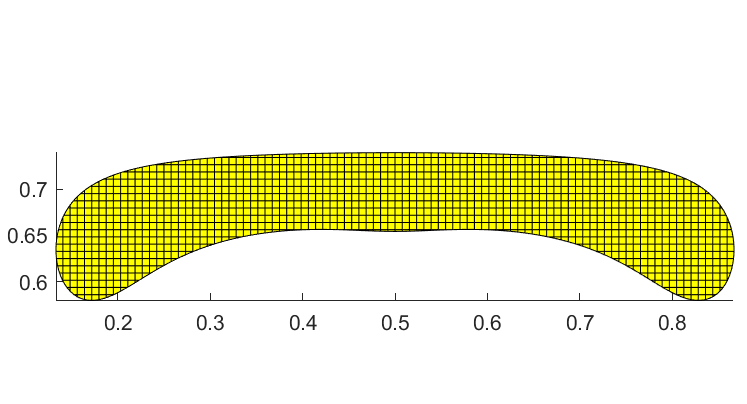}}
	\subfigure[$t=T/4$]{
	\includegraphics[width=5cm]{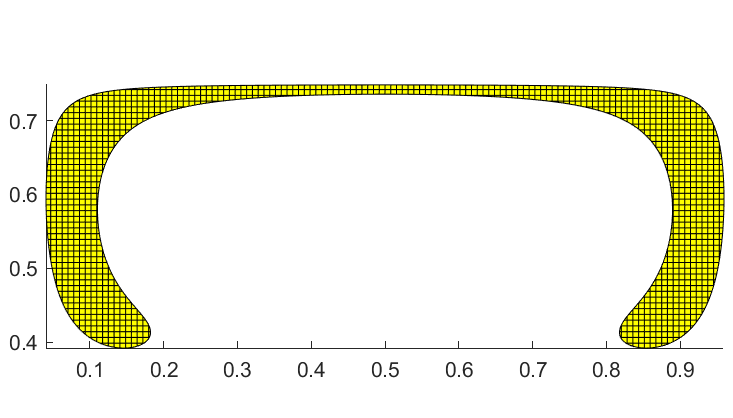}} \\
	\subfigure[$t=T/2$]{
	\includegraphics[width=5cm]{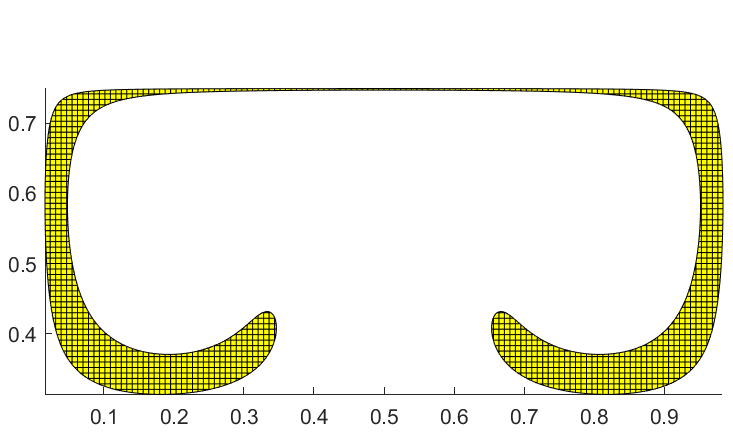}}
	\subfigure[$t=3T/4$]{
	\includegraphics[width=5cm]{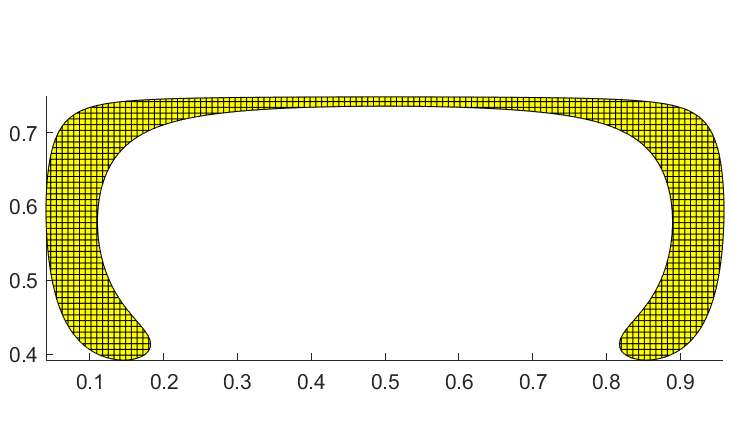}}
    \subfigure[$t=T$]{
	\includegraphics[width=3cm]{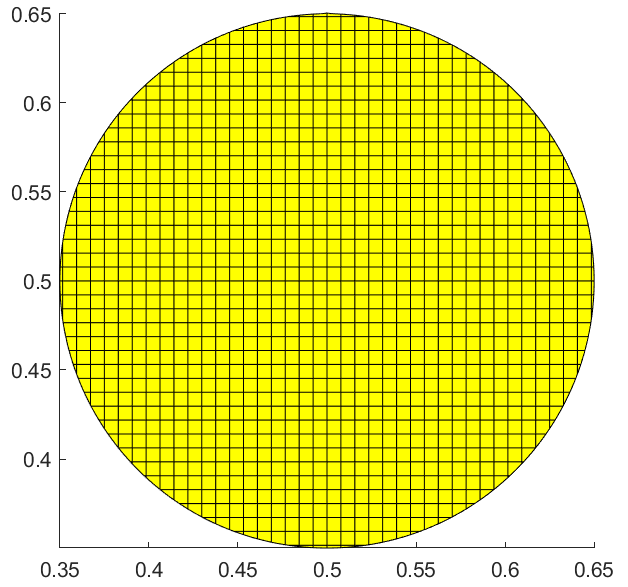}}
\caption{Computational domains $\Omega^n_h$ at different time steps
	$(h=1/128,\;k=4)$.} \label{fig:domain-ex3}
	\end{figure}

\section*{Acknowledgments}

The authors are grateful to professor Qinghai Zhang of Zhejiang University, China. The implementation of Algorithm~\ref{alg:mars} in this paper is based on professor Zhang's interface-tracking code.


\bibliographystyle{amsplain}

\end{document}